%My Template
\documentclass[12pt]{amsart}
\usepackage{amssymb}
\usepackage[margin=1in]{geometry}
%\usepackage{hyperref}
%\usepackage[notref,notcite]{showkeys}
%Above line gives the numbers of equations, comment out when finished%
%%%%%%%%%%%%%%%%%%% OPERATOR NAMES AND SUCH %%%%%%%%%%%%%%%%%%%%%%%

\sloppy

\DeclareMathOperator{\supp}{supp}

\DeclareMathOperator{\Tr}{Tr}

\DeclareMathOperator{\meas}{meas}
\DeclareMathOperator{\Op}{Op}
\DeclareMathOperator{\AD}{AD}
\DeclareMathOperator*{\slim}{s-lim}

\renewcommand{\Im}{\hbox{{\rm Im}}\,}

\newcommand{\abs}[1]{\lvert#1\rvert}
\newcommand{\norm}[1]{\lVert#1\rVert}
\newcommand{\Norm}[1]{\left\lVert#1\right\rVert}
\newcommand{\jap}[1]{\langle#1\rangle}

%%%%%%%%%%%%%%%%%% BOLD AND CALLIGRAPHIC LETTERS %%%%%%%%%%%%%%

\newcommand{\N}{{\mathbb N}}
\newcommand{\R}{{\mathbb R}}

\newcommand{\dd}{{\mathrm{d}}}
\newcommand{\p}{{\rho}}

\newcommand{\w}{{\omega}}
\newcommand{\e}{{\varepsilon}}
\newcommand{\x}{{\langle x \rangle}}
\newcommand{\sph}{{{\mathbb S}^{d-1}}}
%%%%%%%%%%%%%%%% EQUATIONS %%%%%%%%%%%%%%%%%%%%%

\numberwithin{equation}{section}

\newtheorem{theorem}{Theorem}[section]
\newtheorem{lemma}[theorem]{Lemma}

\newtheorem{proposition}[theorem]{Proposition}

\theoremstyle{definition}

\theoremstyle{remark}

\newtheorem*{remark*}{\bf Remark}

\numberwithin{equation}{section}

%%%%%%%%%%%%%%%%%%%% END OF DEFINITIONS %%%%%%%%%%%%%%%%%%%%%%%%

\subjclass[2010]{Primary 35P25; Secondary 35P20, 81U20}

\keywords{Scattering matrix, Born approximation, eigenvalue distribution, X-ray transform}

\begin{document}

\title[Spectral density of the scattering matrix]{The spectral density of the scattering matrix for  high energies}

\author{Daniel Bulger and Alexander Pushnitski}
\address{Department of Mathematics,
King's College London,
Strand, London, WC2R~2LS, U.K.}
\email{daniel.bulger@kcl.ac.uk}
\email{alexander.pushnitski@kcl.ac.uk}

\begin{abstract}
We determine the 
density of eigenvalues of the scattering matrix 
of the Schr\"odinger operator with  a short range potential
in the high energy asymptotic regime. 
We give an explicit formula for this density 
in terms of the X-ray transform of the potential. 
\end{abstract}

\maketitle

%%%%%%%%%%%%%%%%%%%%%%%%%%%%%%%%%%%%%%%%
\section{Introduction and Main Result}
\label{sec.a}
%%%%%%%%%%%%%%%%%%%%%%%%%%%%%%%%%%%%%%%%
%%%%%%%%%%%%%%%%%%%%%%%%%%%%%%%%%%%%%%%%%
\subsection{Introduction}
\label{sec.a0}
The object of study in this paper is the (on-shell) scattering matrix $S(\lambda)$ 
corresponding to the scattering of a $d$-dimensional quantum particle on an 
external short range potential $V$ at the energy $\lambda>0$. 
The scattering matrix $S(\lambda)$ 
is a unitary operator in $L^2(\mathbb S^{d-1})$ and the difference $S(\lambda)-I$ 
is compact. Thus, the eigenvalues of $S(\lambda)$ can be written as 
\begin{equation}
\label{eq.a1a}
\exp(i\theta_{n}(\lambda)),\quad
\theta_{n}(\lambda)\in [-\pi,\pi), 
\quad n\in\N
\end{equation}
and $\theta_n(\lambda)\to0$ as $n\to\infty$. 
The quantities $\theta_n(\lambda)$ are known as scattering phases. 
The scattering phases are usually discussed in physics literature
(see e.g. \cite[Section~123]{LL}) under the additional assumption of the spherical symmetry 
of $V$; here we do not need this assumption. 

Our aim is to study the asymptotic distribution of scattering phases
$\{\theta_n(\lambda)\}_{n=1}^\infty$ when $\lambda\to\infty$. 
It turns out that after an appropriate scaling the asymptotic density of scattering phases
can be described by a simple explicit formula involving the X-ray transform 
(see \eqref{eq.a5}) of the potential $V$. 
This formula has a semiclassical nature. 

The key idea of this paper goes back to the work of M.~Sh.~Birman
and D.~R.~Yafaev \cite{BYa1} (see also \cite{BI1}), 
where the asymptotics of $\theta_n(\lambda)$ 
for a \emph{fixed} $\lambda$ and $n\to\infty$ was determined for some 
class of potentials $V$. 
This asymptotic behaviour is \emph{not} uniform in $\lambda$, and thus our 
results cannot be derived from those of \cite{BYa1}.  
However, both the results of \cite{BYa1} and our results are based on the following 
observation. The leading term of the asymptotics of $\theta_n(\lambda)$ 
(in both asymptotic regimes) is determined by the Born approximation of 
the scattering matrix. The Born approximation is essentially a pseudodifferential 
operator ($\Psi$DO) on the sphere $\mathbb S^{d-1}$ with the symbol given by 
the X-ray transform of $V$. 
Standard $\Psi$DO results can be used to give spectral asymptotics for 
operators of such type. In both asymptotic regimes, the desired spectral asymptotics
are given by a Weyl type formula involving the symbol of the $\Psi$DO.

%%%%%%%%%%%%%%%%%%%%%%%%%%%%%%%%%%%%%%%%%
\subsection{The scattering matrix}
\label{sec.a1}
%%%%%%%%%%%%%%%%%%%%%%%%%%%%%%%%%%%%%%%%%
Let us briefly recall the relevant definitions. 
Let $H_{0}=-\Delta$ and $H=-\Delta+V$ be the Schr\"odinger operators
in $L^{2}(\R^{d})$, $d\geq 2$, 
where $V$ is the operator of multiplication by a real-valued potential $V\in C(\R^{d})$ 
which is assumed to  satisfy the estimate
\begin{equation}
\label{eq.a1}
\abs{V(x)}\leq\frac{C}{(1+\abs{x})^{\p}}, \quad \p>1
\end{equation}
with some constant $C>0$.  
It is one of the fundamental facts of scattering theory 
\cite{Kato,Kuroda}
that under the assumption \eqref{eq.a1} the wave operators
\begin{equation*}
W_{\pm}=\slim_{t\rightarrow\pm\infty}e^{itH}e^{-itH_{0}}
\end{equation*}
exist and are complete;
the scattering operator $\mathbf S=W_+^*W_-$ is unitary in $L^{2}(\R^{d})$ and commutes with $H_0$. 
Let 
$F: L^2(\R^d)\to L^2((0,\infty);L^2(\sph))$ be the unitary operator
$$
(Fu)(\lambda,\omega)=\frac1{\sqrt{2}}\lambda^{(d-2)/4}\widehat u(\sqrt{\lambda}\omega), 
\quad \lambda>0,\quad \omega\in\sph,
$$
where $\widehat u$ is the usual (unitary) Fourier transform of $u$. 
The operator $F$ diagonalises $H_0$, i.e.
$$
(FH_0 u)(\lambda,\omega)
=
\lambda (Fu)(\lambda,\omega),
\quad
\forall u\in C_0^\infty(\R^{d}).
$$
The commutation relation $\mathbf SH_0=H_0\mathbf S$ 
implies that $F$ also diagonalises $\mathbf S$, i.e.
$$
(F\mathbf Su)(\lambda,\cdot)=S(\lambda)(Fu)(\lambda,\cdot),
$$
where $S(\lambda):L^2(\sph)\to L^2(\sph)$ 
is the unitary operator known as the (on-shell) scattering matrix.
See e.g. the book \cite{Ya2} for the details. 

Under the assumption 
\eqref{eq.a1} the operator $S(\lambda)-I$ is compact and consequently the eigenvalues
of  $S(\lambda)$ (enumerated with multiplicities taken into account)
can be written as \eqref{eq.a1a} with $\theta_n(\lambda)\to0$ 
as $n\to\infty$.

%%%%%%%%%%%%%%%%%%%%%%%%%%%%%%%%%%%%%%%%%%
\subsection{The purpose of the paper}
\label{sec.a2}
%%%%%%%%%%%%%%%%%%%%%%%%%%%%%%%%%%%%%%%%%%
Our purpose is to describe the asymptotic density of the eigenvalues of 
$S(\lambda)$ as $\lambda\rightarrow\infty$. 
We recall the estimate (see e.g. \cite[Section~8.1]{Ya2})
\begin{equation}
\Vert S(\lambda) - I\Vert = O(\lambda^{-1/2}),\quad \lambda\rightarrow\infty.
\label{eq.a2}
\end{equation}
This estimate is sharp, i.e. $O(\lambda^{-1/2})$ cannot be replaced by 
$o(\lambda^{-1/2})$; this can be seen by considering the case of a spherically symmetric potential 
and using the separation of variables.
Thus, the spectrum of the scattering matrix $S(\lambda)$ 
for large $\lambda$ 
consists of a cluster of eigenvalues located on an arc of length $O(\lambda^{-1/2})$ 
around $1$. Let us define the eigenvalue counting measure for $S(\lambda)$. 
The estimate \eqref{eq.a2} suggests the following scaling: for $\lambda\geq1$, 
set $k=\sqrt{\lambda}>0$ 
and define (using notation \eqref{eq.a1a})
\begin{equation}
\label{eq.a3}
\mu_{k}([\alpha,\beta])
=
\#\{ n\in\N : \alpha\leq k\theta_{n}(k^2)\leq\beta\},
\quad
[\alpha,\beta]\subset\R\backslash\{ 0\}
\end{equation}
where $\#$ denotes the number of elements in the set. 
We will study the weak asymptotics of $\mu_{k}$ as $k\rightarrow\infty$,
i.e. we consider the asymptotics of the integrals 
$$
\int_{-\infty}^{\infty}\psi(t)\dd\mu_{k}(t),\quad k\rightarrow\infty
$$
for test functions $\psi\in C_{0}^{\infty}(\R\backslash\{ 0\})$.

%%%%%%%%%%%%%%%%%%%%%%%%%%%%%%%%%%%%%%%%%%%
\subsection{Main result}
\label{sec.a3}
%%%%%%%%%%%%%%%%%%%%%%%%%%%%%%%%%%%%%%%%%%%
In order to describe the weak limit of the measures $\mu_k$, we need to fix some notation.
For any $\w\in\sph$, let $\Lambda_\omega\subset\R^{d}$ denote the hyperplane passing through the origin and orthogonal to $\w$. 
We equip both $\sph$ and $\Lambda_\omega$ with the standard $(d-1)$-dimensional Lebesgue measure
(=Euclidean area).
We set
\begin{equation}
\label{eq.a5}
X(\w,\eta)=-\frac{1}{2}\int_{-\infty}^{\infty}V(t\w +\eta)\dd t,
\quad
\omega\in\sph,
\quad 
\eta\in\Lambda_\omega.
\end{equation}
The function $X$ (up to a multiplicative factor) is known as the X-ray transform of $V$ in the inverse problem literature.
The following elementary estimate is a direct consequence of \eqref{eq.a1}:
\begin{equation}
\label{eq.a6}
\abs{X(\w,\eta)}\leq C(V)(1+\abs{\eta})^{1-\p},
\quad \omega\in\sph,
\quad \eta\in\Lambda_\omega
\end{equation}
with some constant $C(V)$.
We define a measure $\mu$ on $\R\backslash\{ 0\}$ by
$$
\mu([\alpha,\beta]) 
= 
(2\pi)^{1-d}\meas
\lbrace(\w,\eta)\in\sph\times\Lambda_\omega : 
\alpha\leq X(\w,\eta)\leq\beta\rbrace,\quad [\alpha,\beta]\subset\R\backslash\{ 0\},
$$
where $\meas$ denotes the usual product measure. 
By the boundedness of $V$, the measure $\mu$ has a compact support. 
The measure $\mu$ need not be absolutely continuous. 
The measure $\mu$ may be weakly singular at zero in the following sense: 
$\mu((0,\infty))$ or $\mu((-\infty,0))$ may be infinite, but, 
by the estimate \eqref{eq.a6} we have 
\begin{equation}
\int_{-\infty}^{\infty}\abs{t}^{\ell}\dd\mu(t)<\infty,
\quad \forall 
\ell>(d-1)/(\rho-1).
\label{a6a}
\end{equation}

Our main result is as follows:
\begin{theorem}
\label{thm.a1}
Let $V\in C(\R^{d})$ be a potential satisfying \eqref{eq.a1}. 
Then for any test function $\psi\in C_0^{\infty}(\R\backslash\{ 0\})$,
\begin{equation}
\label{eq.a8}
\lim_{k\rightarrow\infty}k^{1-d}\int_{-\infty}^{\infty}\psi(t)\dd\mu_{k}(t) = \int_{-\infty}^{\infty}\psi(t)\dd\mu(t).
\end{equation}
\end{theorem}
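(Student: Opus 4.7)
The plan is to identify the measure $\mu_k$, up to asymptotically vanishing errors, with the eigenvalue counting measure of a semiclassical pseudodifferential operator ($\Psi$DO) on $\sph$ whose principal symbol is the X-ray transform $X(\w,\eta)$, and then to invoke the standard Weyl law. The starting point is the Born expansion
$$
S(\lambda) - I = T_1(\lambda) + T_2(\lambda) + R(\lambda),
\qquad \norm{T_n(\lambda)} = O(\lambda^{-n/2}),
$$
obtained by iterating the Lippmann--Schwinger equation. Since $V$ is real-valued, a direct calculation shows that $T_1(\lambda) = iB(\lambda)$ with $B(\lambda)$ self-adjoint. Using unitarity of $S(\lambda)$, I would work with the self-adjoint operator $A(\lambda) = -i\log S(\lambda)$, whose eigenvalues are exactly $\{\theta_n(\lambda)\}$, so that $\mu_k$ is (by definition) the counting measure of $kA(k^2)$. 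Expanding the principal branch of the logarithm yields $A(\lambda) = B(\lambda) + E(\lambda)$ with $\norm{E(\lambda)} = O(\lambda^{-1})$.

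Next I would identify $kB(k^2)$ as a semiclassical $\Psi$DO on $\sph$. The Schwartz kernel of $T_1(k^2)$ on $\sph\times\sph$ is, up to an explicit constant, $k^{d-2}\widehat V\bigl(k(\w-\w')\bigr)$. Writing $\w' = \w + \eta/k$ with $\eta$ in the tangent space $T_\w\sph\cong\Lambda_\w$, the argument of $\widehat V$ becomes $-\eta + O(k^{-1})$. In local charts on $\sph$ this exhibits $kB(k^2)$ as $\Opwk(a_k)$ for a semiclassical Weyl symbol $a_k$ converging (modulo lower-order corrections) to a fixed constant multiple of $X(\w,\eta)$; the Fourier slice identity $\int V(\eta + t\w)\dd t = \const \cdot \mathcal F^{-1}_\xi[\widehat V|_{\Lambda_\w}](\eta)$ is the algebraic fact underlying this identification.

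The main analytic input is then the semiclassical Weyl asymptotics
$$
k^{1-d}\Tr \psi(kB(k^2)) \;\longrightarrow\; (2\pi)^{1-d}\int_{\sph}\!\!\int_{\Lambda_\w} \psi(X(\w,\eta))\,\dd\eta\,\dd\w = \int_{-\infty}^{\infty}\psi(t)\,\dd\mu(t).
$$
The discrepancy coming from $E(\lambda)$ is harmless: by the Ky~Fan / min-max inequality, $\abs{\lambda_n(kA(k^2)) - \lambda_n(kB(k^2))} \leq \norm{kE(k^2)} = O(k^{-1})$, so that
$$
\bigl|\Tr\psi(kA(k^2)) - \Tr\psi(kB(k^2))\bigr| \leq \norm{\psi'}_\infty \norm{kE(k^2)} N_{\text{eff}} = O(k^{d-2}),
$$
where $N_{\text{eff}} = O(k^{d-1})$ is the number of eigenvalues of $kB(k^2)$ in a neighbourhood of $\supp\psi$ supplied by Weyl's law itself. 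Dividing by $k^{d-1}$ makes this negligible.

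The main obstacle is justifying the semiclassical Weyl law for the symbol $X$: under the hypothesis \eqref{eq.a1}, $X$ only inherits the regularity of $V$ and decays as $\abs{\eta}^{1-\p}$ by \eqref{eq.a6}, while $\Lambda_\w$ is non-compact. The crucial observation is that since $\supp\psi$ is separated from $0$, the set $X^{-1}(\supp\psi)$ is bounded in $\eta$, so the analysis localises to a compact piece of $T^*\sph$. One then approximates $V$ by smooth, compactly supported $V_\e$, applies the standard $\Psi$DO Weyl law to $B_\e(k^2)$, and passes to the limit $\e\to 0$ using \eqref{a6a} and a dominated-convergence argument to control both the symbol-side and operator-side contributions. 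This last step --- quantifying the stability of the sharp Weyl asymptotics under low regularity of the symbol --- is where the most technical care is required; once it is in place, Theorem~\ref{thm.a1} follows.
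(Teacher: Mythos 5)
Your proposal follows essentially the same strategy as the paper — isolate the Born approximation, recognize it (after scaling by $k$) as a semiclassical $\Psi$DO on $\sph$ whose principal symbol is the X-ray transform $X(\omega,\eta)$, apply Weyl asymptotics, and extend from $C_0^\infty$ potentials to general $V\in X_\rho$ by approximation. The main methodological divergence is that you work with the self-adjoint operator $A(\lambda)=-i\log S(\lambda)$ (eigenvalues $\theta_n$) and the functional calculus $\Tr\psi(kA(k^2))$, whereas the paper works with $\Im S(\lambda)$ (eigenvalues $\sin\theta_n$), first establishes moment asymptotics $k^{1-d}\Tr(k\Im S_B(k^2))^\ell\to\int t^\ell\,d\mu(t)$, then passes from $(\sin\theta_n)^\ell$ to $\theta_n^\ell$ via the elementary estimate $|\theta-\sin\theta|\le C|\sin\theta|^3$, and finally upgrades from powers to general $\psi\in C_0^\infty(\R\setminus\{0\})$ by Weierstrass approximation. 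The paper's route avoids functional-calculus estimates and keeps everything at the level of traces of integer powers, which dovetails cleanly with the $\Psi$DO composition calculus in Proposition~\ref{prop.b4}.

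One point in your sketch needs repair: in the error bound
\[
\bigl|\Tr\psi(kA(k^2))-\Tr\psi(kB(k^2))\bigr|\le \norm{\psi'}_\infty\,\norm{kE(k^2)}\,N_{\mathrm{eff}},
\]
you take $N_{\mathrm{eff}}=O(k^{d-1})$ to be ``supplied by Weyl's law itself.'' As stated this is circular — you would be invoking the conclusion to control the error needed to prove the conclusion. The right input here is the \emph{a priori} Schatten-class bound (the paper's Proposition~\ref{prop.b1}, ultimately \cite[Proposition~8.1.3]{Ya2}): $\norm{k\Im S_B(k^2)}_\ell=O(k^{(d-1)/\ell})$ for $\ell>(d-1)/(\rho-1)$, which gives $\#\{n:|\lambda_n(kB(k^2))|\ge\epsilon\}\le\epsilon^{-\ell}\norm{kB(k^2)}_\ell^\ell=O(k^{d-1})$ independently of any Weyl asymptotics. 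These Schatten estimates are indispensable at several places in the paper (controlling $S-S_B$, and controlling the passage $\sin\theta_n\leftrightarrow\theta_n$), and your argument needs them too. Once that is in place, your plan is a valid alternative; your closing paragraph, where you localize to bounded $\eta$ (since $\supp\psi$ avoids $0$) and approximate $V$ by $C_0^\infty$ potentials, is precisely the content of the paper's Lemmas~\ref{prop.c1} and~\ref{prop.c2}, with the continuity of the functionals in $V$ again resting on \eqref{eq.b3}.
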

This can be more succinctly put as the weak convergence of the measures
\begin{equation}
k^{1-d}\mu_k\to\mu, \quad k\to\infty.
\label{eq.a8a}
\end{equation}

Much of the inspiration for both the content of this paper and the proofs 
may be found in \cite{AP1}, where similar asymptotics are determined for the 
spectrum of the Landau Hamiltonian (i.e.\ two-dimensional Schr\"odinger operator 
with a constant homogeneous magnetic field)
perturbed by a potential which obeys the same condition \eqref{eq.a1}.

We would like to mention also the paper \cite{ZZ} where
the high energy asymptotic distribution of the phases $\theta_n(\lambda)$ was studied
for scattering problems on manifolds of  a certain special class. 
The results of \cite{ZZ} are much more detailed than ours and include the 
asymptotics of the pair correlation measure.

\subsection{Comparison with \cite{BYa1}}
In \cite{BYa1}, the case of potentials with the power asymptotics at infinity
of the type
\begin{equation}
V(x)
=
v(x/\abs{x})\abs{x}^{-\rho}(1+o(1)), 
\quad
\abs{x}\to\infty, 
\quad 
\rho>1,
\label{eq.a14a}
\end{equation}
was considered. 
Using our notation $\mu_k$, $\mu$, the result of \cite{BYa1}
can be written as
\begin{equation}
\begin{split}
k^{1-d}\mu_k((\e,\infty))&\sim
\mu((\e,\infty)),
%\label{eq.a14b}
\\
k^{1-d}\mu_k((-\infty,-\e))&\sim
 \mu((-\infty,-\e)),
\end{split}
\label{eq.a14c}
\end{equation}
when $k>0$ is fixed and $\e\to+0$. Here $a\sim b$ means $\frac{a}{b}\to1$.

Clearly, our main result \eqref{eq.a8a} is expressed by the same formula
as \eqref{eq.a14c}, but the asymptotic regimes are different. Neither of the
results \eqref{eq.a8a}, \eqref{eq.a14c} implies the other one.

%%%%%%%%%%%%%%%%%%%%%%%%%%%%%%%%%%%%%%%%%%%%%%
\subsection{Semiclassical interpretation}
\label{sec.a5}
%%%%%%%%%%%%%%%%%%%%%%%%%%%%%%%%%%%%%%%%%%%%%%
By the definition of the scattering operator $\mathbf{S}$, for any $\psi\in L^{2}(\R^{d})$ we have
\begin{equation*}
\begin{split}
i((\mathbf{S}-I)\psi,\psi) 
&= i\lim_{t\rightarrow\infty} ((e^{-2itH}e^{itH_{0}}\psi,e^{-itH_{0}}\psi)-\Vert\psi\Vert^{2}) 
= i\int_{0}^{\infty}\frac{\dd}{\dd t}(e^{-2itH}e^{itH_{0}}\psi,e^{-itH_{0}}\psi)\dd t\\
&=\int_{0}^{\infty} (Ve^{-2itH}e^{itH_{0}}\psi,e^{-itH_{0}}\psi)\dd t
+ \int_{0}^{\infty} (Ve^{itH_{0}}\psi,e^{2itH}e^{-itH_{0}}\psi)\dd t.
\end{split}
\end{equation*}
If $\psi$ corresponds to large energies, the right hand side can be approximated by the first term 
in its expansion in powers of $V$. This means that we can replace $e^{itH}$ by $e^{itH_0}$ 
in the above expressions, and so  
\begin{equation}
\label{eq.a15}
i((\mathbf{S}-I)\psi,\psi) \approx \int_{-\infty}^{\infty} (Ve^{-itH_{0}}\psi,e^{-itH_{0}}\psi)\dd t, \quad\psi\in L^{2}(\R^{d}),
\end{equation}
which is exactly the Born approximation in the time-dependent picture. 

In order to write down the classical analogue of the right hand side of \eqref{eq.a15}, 
assume that $\psi$ is concentrated near $x$ in the coordinate representation and near $p$ in the 
momentum representation. Then $\psi$ represents a particle with the coordinate $x$ and momentum $p$, 
and in the same way $e^{-itH_{0}}\psi$ represents a particle with the coordinate $x+2pt$ and momentum $p$. 
Thus, the classical analogue of the right hand side of \eqref{eq.a15} is 
\begin{equation*}
\int_{-\infty}^{\infty}V(x+2pt)\dd t=\frac{1}{2\vert p\vert}\int_{-\infty}^{\infty}V(x+\w t')\dd t',\quad (x,p)\in\R^{d}\times\R^{d},
\end{equation*}
where $\w=\frac{p}{\vert p\vert}\in\sph$. 
This calculation 
explains the appearance of the X-ray transform  in the asymptotics of $S(\lambda)$.

%%%%%%%%%%%%%%%%%%%%%%%%%%%%%%%%%%%%%%%%%%%%%%
\subsection{Key steps of the proof}
\label{sec.a4}
%%%%%%%%%%%%%%%%%%%%%%%%%%%%%%%%%%%%%%%%%%%%%%
First we recall the stationary representation for the scattering matrix.
For $k>0$ and $\rho>1$, we define the operator
$\Gamma_\rho(k):L^2(\R^d)\to L^2(\sph)$ by 
\begin{equation}
\label{eq.a9}
(\Gamma_\rho(k)u)(\omega)
=
\frac1{\sqrt{2}}
k^{(d-2)/2}(2\pi)^{-d/2}
\int_{\R^d} u(x)\langle x\rangle^{-\rho/2} e^{-ik\jap{x,\omega}}
\dd x,\quad\w\in\sph,
\end{equation}
where $\langle x\rangle=(1+\abs{x}^2)^{1/2}$. 
By the Sobolev trace theorem, $\Gamma_\rho(k)$ is bounded for $\rho>1$. 
Next, let 
$$
T(z)=\langle x\rangle^{-\rho/2}(H-zI)^{-1}\langle x\rangle^{-\rho/2}, 
\quad 
\Im z>0;
$$
according to the limiting absorption principle, the limits $T(k^2\pm i0)$ 
exist in the operator norm for all $k>0$. 
Denote by $J$ the bounded operator of multiplication by $\langle x\rangle^{\rho}V(x)$ in $L^2(\R^d)$. 
The stationary representation for the scattering matrix can be written as (see e.g.\cite[Section~6.6]{Ya2})
$$
S(k^2)=I-2\pi i \Gamma_\rho(k)(J-JT(k^2+i0)J)\Gamma_\rho(k)^*, \quad k>0.
$$
The asymptotic density of eigenvalues of $S(k^2)$ for large $k$ is determined 
by the \emph{Born approximation} of the scattering matrix, defined as
\begin{equation}
\label{eq.a12}
S_B(k^2)=I-2\pi i \Gamma_\rho(k)J\Gamma_\rho(k)^*, \quad k>0.
\end{equation}
The key observation due to M.~Birman and D.~Yafaev \cite{BYa1}
is that the operator $\Gamma_\rho(k)J\Gamma_\rho(k)^*$ in $L^{2}(\sph)$
with the integral kernel
$$
2^{-1}k^{d-2}(2\pi)^{-d}\int_{\R^{d}}e^{-ik\jap{\w-\w',x}}V(x)\dd x,
\quad\w,\w'\in\sph
$$
can be represented as a $\Psi$DO
on the sphere with the symbol (up to inessential constants) $X(\omega,\eta)$.  
We combine this observation with the  standard semiclassical 
pseudodifferential techniques to 
obtain the spectral asymptotics of $S_B(k^2)$. 
In this way we prove the asymptotic formula (see Lemma~\ref{prop.c2})
\begin{equation}
\label{eq.a14}
\lim_{k\rightarrow\infty}k^{1-d}\Tr(\Im kS_{B}(k^{2}))^{\ell} 
= 
\int_{-\infty}^{\infty}t^{\ell}\dd\mu(t)
\end{equation}
for any natural number $\ell>(d-1)/(\rho-1)$; 
note that the r.h.s. of \eqref{eq.a14} is finite by \eqref{a6a}.

Using \eqref{eq.a14} and the estimates for the Schatten norm of $S(k^2)-S_B(k^2)$ 
we prove that \eqref{eq.a8} holds true for test functions $\psi(t)$ which coincide with $t^{\ell}$ 
for all sufficiently small $t$.
Theorem~\ref{thm.a1} follows by an application of the Weierstrass approximation theorem.

%%%%%%%%%%%%%%%%%%%%%%%%%%%%%%%%%%%%%%%%%%%
\subsection{Acknowledgements}
\label{sec.a6}
%%%%%%%%%%%%%%%%%%%%%%%%%%%%%%%%%%%%%%%%%%%
The authors are grateful to  Y.~Safarov
for expert advice concerning the $\Psi$DO aspect of this work.
The authors are grateful to 
N.~Filonov and D.~Yafaev for a number of useful discussions 
and remarks on the text of the paper.

%%%%%%%%%%%%%%%%%%%%%%%%%%%%%%%%%%%%%%%%%%%%%%
\section{Preliminary statements}
\label{sec.b}
%%%%%%%%%%%%%%%%%%%%%%%%%%%%%%%%%%%%%%%%%%%%%%

%%%%%%%%%%%%%%%%%%%%%%%%%%%%%%%%%%%%%%%%%%%%%%
\subsection{The limiting absorption principle and its consequences}
\label{sec.b1}
%%%%%%%%%%%%%%%%%%%%%%%%%%%%%%%%%%%%%%%%%%%%%%
First we need some notation. 
We denote by $S_{p}$, $p\geq1$,  the usual Schatten class and by $\Vert\cdot\Vert_{p}$ 
the associated norm. 
Let $X_{\rho}$ be the normed linear space of all potentials $V\in C(\R^{d})$ satisfying \eqref{eq.a1} with the norm
$$
\Norm{V}_{X_{\rho}} = \sup_{x\in\R^{d}}\abs{V(x)}\x^{\rho}.
$$
We recall the following estimates from \cite{Ya2}:
%%%%%%%%%%%%%
\begin{proposition}\label{prop.b1}
%%%%%%%%%%%%%
Let $V\in X_\rho$ with some $\rho>1$. 
Then for any $\ell\geq 1$ satisfying $\ell>\frac{d-1}{\p-1}$, one has
\begin{align}
\label{eq.b3}
&\sup_{k\geq 1}k^{\frac{1-d}{\ell}}\Vert k\Im S_{B}(k^2)\Vert_{\ell}
\leq 
C(\ell,\p,d)\norm{V}_{X_\rho},
\\
\label{eq.b3b}
&\sup_{k\geq 1}k^{\frac{1-d}{\ell}}\Vert k^2 \Im (S_{B}(k^2)-S(k^2))\Vert_{\ell}
\leq 
C(\ell,\p,d,V),
\\
\label{eq.b3a}
&\sup_{k\geq 1}k^{\frac{1-d}{\ell}}\Vert k\Im S(k^2)\Vert_{\ell}
\leq 
C(\ell,\p,d,V).
\end{align}
\end{proposition}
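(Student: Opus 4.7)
The plan is to reduce all three bounds to a single Schatten estimate for the operator $A_k := \Gamma_\p(k) J \Gamma_\p(k)^*$, and then transfer it to the full scattering matrix via the high-energy limiting absorption principle. Since $J$ is multiplication by the real function $\x^\p V(x)$, the operator $A_k$ is self-adjoint, so \eqref{eq.a12} gives $\Im S_B(k^2) = -2\pi A_k$, and \eqref{eq.b3} is equivalent to
\[
\|A_k\|_\ell \leq C\|V\|_{X_\p}\, k^{(d-1)/\ell - 1}.
\]
To prove this I would use the semiclassical pseudodifferential interpretation already outlined in the introduction: the integral kernel $K(\w,\w') = \tfrac12 k^{d-2}(2\pi)^{-d/2}\widehat V(k(\w-\w'))$ of $A_k$ represents, after a partition of unity on $\sph$ and a straightening of each patch to $\R^{d-1}$, a semiclassical $\Psi$DO with parameter $h=1/k$ and principal symbol proportional to the X-ray transform $X(\w,\eta)$. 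The standard semiclassical Schatten bound $\|\Op_h(a)\|_\ell^\ell \leq C h^{-(d-1)}\|a\|_{L^\ell}^\ell$ then yields the desired inequality, provided $\|X\|_{L^\ell}$ is controlled by $\|V\|_{X_\p}$; this is exactly where the hypothesis $\ell > (d-1)/(\p-1)$ enters, via the decay \eqref{eq.a6}.

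For \eqref{eq.b3b}, the stationary representation written just above \eqref{eq.a12} gives the identity
\[
S(k^2) - S_B(k^2) = 2\pi i\, \Gamma_\p(k) J\, T(k^2+i0)\, J\, \Gamma_\p(k)^*.
\]
H\"older for Schatten norms together with the high-energy limiting absorption principle $\|T(k^2+i0)\| = O(k^{-1})$ (see \cite{Ya2}) gives $\|S(k^2)-S_B(k^2)\|_\ell \leq C k^{-1}\|\Gamma_\p(k) J\|_{2\ell}^{2}$. The key observation is that $\|\Gamma_\p(k) J\|_{2\ell}^2 = \|\Gamma_\p(k) J J^* \Gamma_\p(k)^*\|_\ell$ is itself an $A_k$-type operator, namely the one associated to the auxiliary potential $W := \x^\p V^2$; the pointwise bound $|W(x)| \leq \|V\|_{X_\p}^2\, \x^{-\p}$ gives $W \in X_\p$ with $\|W\|_{X_\p}\leq\|V\|_{X_\p}^2$, and applying the previous step with $V$ replaced by $W$ yields $\|\Gamma_\p(k) J\|_{2\ell}^2 \leq C\|V\|_{X_\p}^2\, k^{(d-1)/\ell - 1}$. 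Combining the two produces $\|S - S_B\|_\ell \leq C(V)\, k^{(d-1)/\ell - 2}$, which is \eqref{eq.b3b}. Estimate \eqref{eq.b3a} then follows from \eqref{eq.b3} and \eqref{eq.b3b} via the triangle inequality applied to $\Im S = \Im S_B + \Im(S - S_B)$, since $k\|\Im(S - S_B)\|_\ell = k^{-1}\|k^2\Im(S - S_B)\|_\ell$ is dominated by the bound for $k\|\Im S_B\|_\ell$ when $k \geq 1$.

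The real obstacle is the Schatten bound for $A_k$ in the first paragraph. The difficulty is not in the heuristic but in making the identification of $A_k$ with a genuine $\Psi$DO on $\sph$ rigorous and quantitatively linear in $\|V\|_{X_\p}$. The robust path I would follow is to compute $\Tr(A_k^m)$ for an integer $m \geq \ell$ directly as an $m$-fold oscillatory integral on $(\sph)^m$, extract the leading $k^{d-1-m}\int|X|^m$ by stationary phase, and then interpolate between integer exponents via the Riesz--Thorin theorem for Schatten ideals to cover non-integer $\ell$. This is the step most dependent on the $\Psi$DO machinery developed in \cite{Ya2}.
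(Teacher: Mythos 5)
Your reductions of \eqref{eq.b3b} and \eqref{eq.b3a} to \eqref{eq.b3} are correct and rather nice. Starting from $S(k^2)-S_B(k^2)=2\pi i\,\Gamma_\rho(k)JT(k^2+i0)J\Gamma_\rho(k)^*$, the identity $\Vert\Gamma_\rho(k)J\Vert_{2\ell}^2=\Vert\Gamma_\rho(k)J^2\Gamma_\rho(k)^*\Vert_\ell$ and the observation that $\Gamma_\rho(k)J^2\Gamma_\rho(k)^*$ is again a Born-type operator for the auxiliary potential $W=\x^\rho V^2$, with $\Norm{W}_{X_\rho}\leq\Norm{V}_{X_\rho}^2$, let you recycle \eqref{eq.b3}; together with the high-energy resolvent bound $\Vert T(k^2+i0)\Vert=O(k^{-1})$ this gives exactly the extra factor of $k$ in \eqref{eq.b3b}, and \eqref{eq.b3a} follows for $k\geq1$ by the triangle inequality. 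None of this is how the paper handles the matter: Proposition~\ref{prop.b1} is not proved in the paper at all --- \eqref{eq.b3} and \eqref{eq.b3b} are quoted verbatim from Propositions~8.1.3 and 8.1.4 of \cite{Ya2}, which obtain them by a direct kernel/singular-value analysis of $\Gamma_\rho(k)J\Gamma_\rho(k)^*$ rather than by semiclassical $\Psi$DO calculus. So for these two estimates you are supplying a genuine alternative derivation, modulo having \eqref{eq.b3} available.

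The gap is therefore concentrated in your proof of \eqref{eq.b3}, and it is a real one. First, the ``standard semiclassical Schatten bound'' $\Vert\Op_h(a)\Vert_\ell\leq Ch^{-(d-1)/\ell}\Vert a\Vert_{L^\ell}$ cannot be invoked as stated: for $V$ merely continuous with the decay \eqref{eq.a1}, the symbol $X(\kappa,\eta)$ is continuous but not smooth, and the $\ell=\infty$ (Calder\'on--Vaillancourt) endpoint of that interpolation scale requires control of derivatives, not just $\Vert a\Vert_{L^\infty}$. Second, your fallback --- compute $\Tr(A_k^m)$ by stationary phase and interpolate --- is circular within the logic of this paper: that trace-asymptotics machinery is exactly Section~\ref{sec.c}, and it runs \emph{downstream} of Proposition~\ref{prop.b1}. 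Lemma~\ref{prop.c1} gives the asymptotics only for $V\in C_0^\infty$, and Lemma~\ref{prop.c2} extends to $V\in X_\rho$ by a density argument whose continuity estimates rest precisely on \eqref{eq.b3} and \eqref{eq.b10}; moreover the asymptotic limit gives $O(k^{d-1})$ for large $k$ but not the uniform bound for all $k\geq1$ with the explicit linear dependence on $\Norm{V}_{X_\rho}$ that the statement requires. (Minor: $\Tr(A_k^m)$ equals $\Vert A_k\Vert_m^m$ only for even $m$ since $A_k$ is not sign-definite, and for the interpolation to cover non-integer $\ell>\frac{d-1}{\rho-1}$ you should interpolate against the operator-norm endpoint rather than a lower integer, which may violate the constraint.) What is actually needed to close the argument is a direct, $k$-uniform singular-value estimate for the weighted restriction operator, which is the content of \cite[Proposition~8.1.3]{Ya2}; without it, the clean reductions in your second and third steps have nothing to stand on.
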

The estimate \eqref{eq.b3} is a direct consequence of \cite[Proposition~8.1.3]{Ya2}.
The estimate \eqref{eq.b3b} is proven in \cite[Proposition~8.1.4]{Ya2}.
The estimate \eqref{eq.b3a} is a direct consequence of \eqref{eq.b3} and \eqref{eq.b3b}.

%%%%%%%%%%%%%%%%%%%
\begin{lemma}\label{prop.b2}
%%%%%%%%%%%%%%%%%%%
Let $V\in X_\rho$ with $\rho>1$. 
Then for any integer $\ell\geq 1$ satisfying $\ell>\frac{d-1}{\p-1}$, 
one has
$$
\vert\Tr(k\Im S(k^2))^{\ell} - \Tr(k\Im S_{B}(k^2))^{\ell}\vert = O(k^{d-2}), 
\quad 
k\to\infty.
$$
\end{lemma}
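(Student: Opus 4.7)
The plan is to reduce the difference of the two traces to a Schatten-Hölder estimate and then balance the Schatten bounds provided by Proposition~\ref{prop.b1}. Write $A = k\,\Im S(k^2)$ and $B = k\,\Im S_B(k^2)$; both are bounded self-adjoint operators on $L^2(\sph)$, so the traces make sense and the algebra below is legitimate. Start from the telescoping identity
\begin{equation*}
A^{\ell}-B^{\ell}=\sum_{j=0}^{\ell-1}A^{j}(A-B)B^{\ell-1-j},
\end{equation*}
apply the trace and use the cyclic property together with the Schatten-Hölder inequality with $\ell$ factors of Schatten exponent $\ell$ (so that $\ell\cdot(1/\ell)=1$):
\begin{equation*}
\bigl\lvert\Tr\bigl(A^{j}(A-B)B^{\ell-1-j}\bigr)\bigr\rvert
\leq
\norm{A}_{\ell}^{j}\,\norm{A-B}_{\ell}\,\norm{B}_{\ell}^{\ell-1-j}.
\end{equation*}

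Now insert the three bounds from Proposition~\ref{prop.b1}. From \eqref{eq.b3} and \eqref{eq.b3a} we have $\norm{A}_{\ell},\norm{B}_{\ell}=O(k^{(d-1)/\ell})$; from \eqref{eq.b3b}, since $A-B=k\,\Im(S(k^{2})-S_{B}(k^{2}))=k^{-1}\cdot k^{2}\Im(S(k^{2})-S_{B}(k^{2}))$, we get $\norm{A-B}_{\ell}=O(k^{(d-1)/\ell-1})$. Multiplying these estimates for the $j$-th term produces the exponent
\begin{equation*}
j\,\frac{d-1}{\ell}+\Bigl(\frac{d-1}{\ell}-1\Bigr)+(\ell-1-j)\,\frac{d-1}{\ell}
=\ell\cdot\frac{d-1}{\ell}-1=d-2,
\end{equation*}
which is independent of $j$. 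Summing the $\ell$ terms gives $\bigl\lvert\Tr A^{\ell}-\Tr B^{\ell}\bigr\rvert=O(k^{d-2})$, as required.

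There is no real obstacle here; the hypothesis $\ell>(d-1)/(\rho-1)$ is used only to ensure that Proposition~\ref{prop.b1} is applicable (so that $A,B,A-B\in S_{\ell}$ with the stated norm bounds), and the balancing of exponents is automatic from the Hölder setup. The only thing one needs to verify carefully is that it is indeed the Schatten-Hölder estimate with $\ell$ equal factors — not a mixed one — that produces the cancellation giving $k^{d-2}$ rather than the naive $k^{d-1}$ that each individual $\Tr A^{\ell}$ or $\Tr B^{\ell}$ could be as large as.
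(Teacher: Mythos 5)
Your proof is correct and takes essentially the same route as the paper: the same telescoping identity $A^{\ell}-B^{\ell}=\sum_{j}A^{j}(A-B)B^{\ell-1-j}$, followed by a Schatten--H\"older bound on each term and the three estimates of Proposition~\ref{prop.b1}, with the extra factor of $k^{-1}$ from \eqref{eq.b3b} producing the drop from $k^{d-1}$ to $k^{d-2}$. The paper just phrases the H\"older step slightly more crudely via \eqref{eq.b10} with a $\max$, while you track the exponent for each $j$ explicitly; these are the same argument.
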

\begin{proof}
From
\begin{equation*}
A^{\ell} - B^{\ell} = \sum_{j=0}^{\ell-1}A^{j}(A-B)B^{\ell-1-j}
\end{equation*}
one easily obtains
\begin{equation}
\label{eq.b10}
\vert\Tr(A^{\ell} - B^{\ell})\vert
\leq
\ell \Vert A-B\Vert_{\ell}\max\{\Vert A\Vert_{\ell}^{\ell-1}, \Vert B\Vert_{\ell}^{\ell-1}\},\quad A,B\in S_{\ell}.
\end{equation}
Thus, it suffices to prove the relation 
$$
\norm{k\Im(S(k^2)-S_B(k^2))}_{\ell}
\max\{\Vert k\Im S(k^2)\Vert_{\ell}^{\ell-1}, \Vert k\Im S_{B}(k^2)\Vert_{\ell}^{\ell-1}\}
=
O(k^{d-2}), 
\quad
k\to\infty.
$$
The latter relation follows by combining \eqref{eq.b3}--\eqref{eq.b3a}.
\end{proof}

%%%%%%%%%%%%%%%%%%%%%%%%%%%%%%%%%%%%%%%%%%%%%%%%%%%%%%
\subsection{Semiclassical $\Psi$DO on the sphere}
\label{sec.b2}
%%%%%%%%%%%%%%%%%%%%%%%%%%%%%%%%%%%%%%%%%%%%%%%%%%%%%%
A semiclassical $\Psi$DO in $L^2(\sph)$ 
can be defined in a variety of ways; 
below we describe a slightly non-standard approach to this definition,
which will simplify our exposition in Section~\ref{sec.c}.

For $\omega,\omega'\in\sph$ such that $\omega+\omega'\not=0$, we set
\begin{equation}
\kappa=\kappa(\omega,\omega')=\frac{\omega+\omega'}{\abs{\omega+\omega'}}
\in\sph.
\label{eq.b11}
\end{equation}
Clearly, $\kappa(\omega,\omega')$ is a smooth function of $(\omega,\omega')\in\sph\times\sph$ 
away from the anti-diagonal 
$$
\AD=\{(\omega,\omega')\in \sph\times\sph \mid \omega+\omega'=0\}.
$$
In order to overcome the (inessential) difficulties related to the singularity of $\kappa$ at
the anti-diagonal, we will assume that our amplitudes vanish in an open neighbourhood
of $\AD$. We will say that a function $b=b(\omega,\omega',\eta)$, $\omega,\omega'\in\sph$,
$\eta\in\Lambda_{\kappa(\omega,\omega')}$, is an \emph{admissible amplitude}, 
if:
\begin{itemize}
\item
$b$ is a $C^\infty$-smooth function of its arguments;
\item
$b(\omega,\omega',\eta)=0$ if
$\abs{\eta}$ is sufficiently large; 
\item
$b(\omega,\omega',\eta)=0$ 
if $(\omega,\omega')$ are in an open neighbourhood of 
$\AD$.  
\end{itemize}

For an admissible amplitude $b$ 
and a semiclassical parameter $k>0$,
we define the operator $\Op_k [b]$ in $L^2(\sph)$  via its integral kernel 
\begin{equation}
\Op_k[b](\omega,\omega')
=
\left(\frac{k}{2\pi}\right)^{d-1}
\int_{\Lambda_{\kappa(\omega,\omega')}} 
e^{-ik\jap{\omega-\omega',\eta}} 
b(\omega,\omega',\eta) \dd\eta,
\label{eq.b13}
\end{equation}
where $\omega,\omega'\in\sph$. 
It is easy to see that for $\omega\not=\omega'$ one has
$$
\Op_k[b](\omega,\omega')=O(k^{-\infty}), \quad k\to\infty.
$$
This shows that the values of the amplitude $b(\omega,\omega',\eta)$ away from an 
open neighbourhood of the diagonal $\omega=\omega'$ 
do not affect the asymptotic properties
of the operator $\Op_k[b]$ as $k\to\infty$.

%%%%%%%%%%%%%%%%%%%%%%
\begin{proposition}\label{prop.b4}
%%%%%%%%%%%%%%%%%%%%%%
For any admissible amplitude $b$
and any $k>0$, the operator $\Op_k[b]$ 
is trace class, and for any $\ell\in\N$ one has
$$
\lim_{k\rightarrow\infty}
\left(\frac{k}{2\pi}\right)^{-d+1}
\Tr (\Op_k[b])^{\ell}
=
\int_{\sph}\int_{\Lambda_\omega}b(\omega,\omega,\eta)^{\ell} 
\dd \eta \, \dd \omega.
$$
\end{proposition}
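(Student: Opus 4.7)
My overall strategy is to recognise $\Op_k[b]$ as a standard semiclassical $\Psi$DO on the sphere, modulo a smoothing remainder, with principal symbol equal to the diagonal restriction $b(\omega,\omega,\eta)$ of the amplitude. Once this identification is in place, the composition calculus gives $(\Op_k[b])^\ell$ as a $\Psi$DO whose principal symbol equals $b(\omega,\omega,\eta)^\ell$, and the classical semiclassical trace formula $\Tr\Op_k[a]=(k/2\pi)^{d-1}\int_{T^*\sph} a\,\dd\omega\,\dd\eta+O(k^{d-2})$ immediately yields the claim.

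The trace class property is painless: since $b$ is smooth with compact support in $\eta$, for any fixed $k>0$ the kernel $\Op_k[b](\omega,\omega')$ is a $C^\infty$ function on the compact manifold $\sph\times\sph$, so $\Op_k[b]$ is smoothing. The case $\ell=1$ is in fact exact — at the diagonal $\omega'=\omega$ the phase in \eqref{eq.b13} vanishes and $\kappa(\omega,\omega)=\omega$, so $\Tr\Op_k[b]=(k/2\pi)^{d-1}\int_\sph\int_{\Lambda_\omega} b(\omega,\omega,\eta)\,\dd\eta\,\dd\omega$ with no error term whatsoever.

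For $\ell\geq 2$ I would first localise. Writing $\Tr(\Op_k[b])^\ell$ as an $\ell$-fold oscillatory integral with phase $-\sum_{j=1}^\ell\jap{\omega_j-\omega_{j+1},\eta_j}$ (indices taken mod $\ell$), the compact $\eta$-support of $b$ permits repeated integration by parts in $\eta_j$ along the direction $\omega_j-\omega_{j+1}\in\Lambda_{\kappa_j}$, each IBP gaining a factor $(k|\omega_j-\omega_{j+1}|^2)^{-1}$. Consequently any region on which some $|\omega_j-\omega_{j+1}|$ is bounded below contributes $O(k^{-\infty})$, and a partition of unity on $\sph$ reduces the problem to amplitudes supported in $U\times U$ for a small coordinate chart $U$ around an arbitrary $\omega_0\in\sph$. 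In such a chart I parametrise $\sph$ by $y\in\R^{d-1}$ with $\omega(0)=\omega_0$, identify the moving subspace $\Lambda_{\kappa(\omega(y),\omega(y'))}$ with $\Lambda_{\omega_0}\cong\R^{d-1}$ by orthogonal projection (non-degenerate since $\kappa$ stays close to $\omega_0$), and expand $\omega(y)-\omega(y')=B(y,y')(y-y')$ with $B$ smooth and $B(0,0)$ an isometry onto $\Lambda_{\omega_0}$. After a linear change of variables in $\xi$ the kernel takes the standard Kohn--Nirenberg form of a semiclassical $\Psi$DO on $\R^{d-1}$ with amplitude $\tilde b(y,y',\xi)$ whose diagonal value is $b(\omega(y),\omega(y),\eta)$ multiplied by a Jacobian; the standard semiclassical trace formula for products of compactly supported Kohn--Nirenberg $\Psi$DOs on $\R^{d-1}$ then gives the result locally, and summation over the partition of unity reassembles the invariant integral on $T^*\sph$.

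The main obstacle is the chart reduction step — specifically, verifying that the $\eta$-integration over the moving hyperplane $\Lambda_{\kappa(\omega,\omega')}$ can be absorbed into a genuine semiclassical $\Psi$DO kernel (rather than producing a more exotic Fourier integral operator), and then checking that the three Jacobian contributions (from the parametrisation of $\sph$, from the projective identification of $\Lambda_{\kappa}$ with $\Lambda_{\omega_0}$, and from the linearisation of $\omega(y)-\omega(y')$) combine to produce exactly the invariant measure $\dd\eta\,\dd\omega$ rather than a chart-dependent expression. The analytic ingredients — non-stationary phase for the off-diagonal localisation and the Euclidean semiclassical trace formula — are classical, but the geometric bookkeeping attached to the definition \eqref{eq.b11} of $\kappa$ is what demands care.
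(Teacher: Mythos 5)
Your proposal is correct and follows essentially the same route as the paper: localise to coordinate charts via non-stationary phase, apply the $\Psi$DO composition calculus, and invoke a semiclassical trace formula, with the $\ell=1$ case being exact. The obstacle you flag as the main difficulty --- matching the chart, projection, and linearisation Jacobians so as to recover the invariant integral over $T^{*}\sph$ --- is precisely what the paper's intrinsic formulation of $\Op_k$ is designed to sidestep: composition yields $(\Op_k[b])^\ell=\sum_m k^{-m}\Op_k[b_m]+R_k$ with $b_0(\omega,\omega,\eta)=b(\omega,\omega,\eta)^\ell$, after which the trace of each $\Op_k[b_m]$ is given by the exact, chart-independent diagonal identity \eqref{eq.b14}, so no Jacobian bookkeeping is ever required.
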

\begin{proof}[Sketch of proof]
The proof follows standard methods of $\Psi$DO theory;
see e.g. \cite[Theorem~9.6]{Dim} for a similar statement in the context of operators
in $\R^n$. Here we only outline the main steps.

First note that for $\ell=1$ the result of Proposition~\ref{prop.b4} is trivial, 
since by a direct evaluation of trace we have the identity
\begin{equation}
\left(\frac{k}{2\pi}\right)^{-d+1}
\Tr (\Op_k[b])
=
\int_{\sph}\int_{\Lambda_\omega}b(\omega,\omega,\eta)
\dd \eta \, \dd \omega.
\label{eq.b14}
\end{equation}
Next, 
using the local coordinates on the sphere and the composition 
formula for symbols of $\Psi$DOs (see e.g. \cite[Proposition~7.7]{Dim}),
we obtain the following statement. 
For any $N>0$ there exists $M>0$ such that $(\Op_k[b])^\ell$ can be 
represented as 
\begin{equation}
(\Op_k[b])^\ell
=
\sum_{m=0}^M k^{-m} \Op_k [b_m] +R_k,
\label{eq.b16}
\end{equation}
where $b_m$ are admissible symbols, $b_0$ is such that
\begin{equation}
b_0(\omega,\omega,\eta)=b(\omega,\omega,\eta)^\ell,
\qquad
\forall\omega\in\sph, \quad \forall\eta\in \Lambda_\omega,
\label{eq.b15}
\end{equation}
and $R_k$ is an integral operator with a smooth kernel which satisfies
$$
\sup_{\omega,\omega'}\abs{R_k(\omega,\omega')}= O(k^{-N}), 
\quad
k\to\infty.
$$
Now taking $N>d-1$ and evaluating the trace in \eqref{eq.b16}, we get
$$
\lim_{k\rightarrow\infty}
\left(\frac{k}{2\pi}\right)^{-d+1}
\Tr (\Op_k[b])^{\ell}
=
\lim_{k\rightarrow\infty}
\left(\frac{k}{2\pi}\right)^{-d+1}
\Tr (\Op_k[b_0])
\\
=
\int_{\sph}\int_{\Lambda_\omega}b_0(\omega,\omega,\eta)
\dd \eta \, \dd \omega.
$$
In view of \eqref{eq.b15}, this proves the required identity. 
\end{proof}

%\goodbreak
%%%%%%%%%%%%%%%%%%%%%%%%%%%%%%%%%%%%%%%%%%%%
\section{Proof of Theorem \ref{thm.a1}}
\label{sec.c}
%%%%%%%%%%%%%%%%%%%%%%%%%%%%%%%%%%%%%%%%%%%%

%%%%%%%%%%%%%%%%%%%%%%%%%%%%%%%%%%%%%%%%%%%%
\subsection{The Born approximation with $V\in C_0^{\infty}(\R^{d})$}
\label{sec.c1}
%%%%%%%%%%%%%%%%%%%%%%%%%%%%%%%%%%%%%%%%%%%%
\begin{lemma}
\label{prop.c1}
Let $V\in C_0^{\infty}(\R^{d})$. Then for any $\ell\in\N$, one has
\begin{equation}
\label{eq.c1}
\lim_{k\rightarrow\infty}k^{1-d}\Tr(k\Im S_{B}(k^2))^{\ell} = \int_{-\infty}^{\infty}t^{\ell}d\mu(t).
\end{equation}
\end{lemma}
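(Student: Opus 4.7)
The plan is to identify $k\,\Im S_B(k^2)$, modulo a negligible remainder, with a semiclassical $\Psi$DO $\Op_k[b]$ on the sphere whose diagonal symbol is the X-ray transform, and then apply Proposition~\ref{prop.b4}.

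\textbf{Kernel identification.} Since $V$ is real, $\Gamma_\rho(k) J \Gamma_\rho(k)^*$ is self-adjoint, so $k\,\Im S_B(k^2) = -2\pi k\,\Gamma_\rho(k) J \Gamma_\rho(k)^*$. Starting from the integral kernel of $\Gamma_\rho(k) J \Gamma_\rho(k)^*$ stated in Section~\ref{sec.a4}, I would introduce the orthogonal decomposition $x = t\kappa + \eta$ with $\kappa=\kappa(\omega,\omega')$ from \eqref{eq.b11} and $\eta\in\Lambda_\kappa$. The identity $\langle\omega-\omega',\omega+\omega'\rangle=0$ (both vectors are unit) yields $\langle\omega-\omega',\kappa\rangle=0$, so the phase reduces to $e^{-ik\jap{\omega-\omega',\eta}}$, and the $t$-integral of $V(t\kappa+\eta)$ equals $-2X(\kappa,\eta)$ by \eqref{eq.a5}. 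This gives
\begin{equation*}
(k\,\Im S_B(k^2))(\omega,\omega')
= \Bigl(\frac{k}{2\pi}\Bigr)^{d-1}\!\!\int_{\Lambda_{\kappa(\omega,\omega')}} e^{-ik\jap{\omega-\omega',\eta}} X(\kappa(\omega,\omega'),\eta)\,\dd\eta,
\end{equation*}
which is formally $\Op_k[b](\omega,\omega')$ with $b(\omega,\omega',\eta)=X(\kappa(\omega,\omega'),\eta)$.

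\textbf{Anti-diagonal cutoff.} The symbol $b$ is not yet admissible because $\kappa$ is singular on $\AD$. I would fix $\chi\in C^\infty(\sph\times\sph)$ equal to $1$ near the diagonal and vanishing in a neighborhood of $\AD$, and set $b_1=\chi\cdot b$. Since $V\in C_0^\infty(\R^d)$, $X(\omega,\eta)$ is smooth and compactly supported in $\eta$ uniformly in $\omega$, so $b_1$ meets all three conditions in the definition of an admissible amplitude. The remainder operator $R_k := k\,\Im S_B(k^2) - \Op_k[b_1]$ has kernel
\begin{equation*}
R_k(\omega,\omega') = -\tfrac{1}{2} k^{d-1}(2\pi)^{1-d}\,(1-\chi(\omega,\omega'))\!\int_{\R^d} V(x) e^{-ik\jap{\omega-\omega',x}}\,\dd x.
\end{equation*}
On $\supp(1-\chi)$, $|\omega-\omega'|$ is bounded below by some $c_0>0$, and since $\widehat V\in\schwartz$, one obtains $R_k(\omega,\omega')=O(k^{-N})$ uniformly for every $N$. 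In particular $\|R_k\|_\ell=O(k^{-N})$ for every $\ell$ and every $N$.

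\textbf{Conclusion via Proposition~\ref{prop.b4}.} Writing $A=k\,\Im S_B(k^2)$ and $B=\Op_k[b_1]=A-R_k$ in the telescoping bound \eqref{eq.b10}, and using \eqref{eq.b3} (valid for every $\ell\geq 1$ since $V\in C_0^\infty$ allows $\rho$ arbitrarily large) to control $\|A\|_\ell=O(k^{(d-1)/\ell})$ and $\|B\|_\ell$, one deduces
\begin{equation*}
\Tr(k\,\Im S_B(k^2))^\ell = \Tr(\Op_k[b_1])^\ell + O(k^{-M}), \qquad \forall M>0.
\end{equation*}
By Proposition~\ref{prop.b4} and the facts $\kappa(\omega,\omega)=\omega$, $\chi(\omega,\omega)=1$, so $b_1(\omega,\omega,\eta)=X(\omega,\eta)$,
\begin{equation*}
\lim_{k\to\infty} k^{1-d}\Tr(\Op_k[b_1])^\ell = (2\pi)^{1-d}\int_{\sph}\int_{\Lambda_\omega} X(\omega,\eta)^\ell\,\dd\eta\,\dd\omega = \int_{-\infty}^{\infty} t^\ell\,\dd\mu(t),
\end{equation*}
by the pushforward definition of $\mu$. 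The main obstacle is the second step: one must verify both that $b_1$ is genuinely admissible despite $\kappa$ being singular on $\AD$ (which is handled by the support condition on $\chi$), and that $R_k$, which does not fit the $\Op_k$ framework, is nevertheless a smoothing operator of arbitrary order thanks to $\widehat V\in\schwartz$. The kernel identification and the final matching with $\mu$ are essentially mechanical.
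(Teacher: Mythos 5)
Your proof is correct and follows essentially the same route as the paper's: the kernel identification via $x = t\kappa + \eta$, the cutoff $\chi$ away from the anti-diagonal producing an admissible amplitude with diagonal value $X(\omega,\eta)$, the $O(k^{-\infty})$ estimate for the remainder from the Schwartz decay of $\widehat V$, and the appeal to Proposition~\ref{prop.b4}. The only cosmetic difference is that you spell out the trace comparison via \eqref{eq.b10} and \eqref{eq.b3}, whereas the paper leaves that step implicit (and handles $\ell=1$ by direct integration over the diagonal).
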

\begin{proof}
1. 
For ease of notation we write $Q(k)=k\Im S_{B}(k^2)$.
By  \eqref{eq.a9} and \eqref{eq.a12}, $Q(k)$ is the integral operator 
in $L^2(\sph)$ with the integral kernel 
$$
Q(k)(\omega,\omega')
=
-\frac{1}{2} \left(\frac{k}{2\pi}\right)^{d-1}
\int_{\R^{d}}e^{-ik\jap{\omega-\omega',x}}V(x)\dd x, 
\quad
\omega,\omega'\in\sph.
$$
For fixed $\omega$, $\omega'$, $\omega+\omega'\not=0$, let $\kappa=\kappa(\omega,\omega')$ be
as in \eqref{eq.b11}.
Write any $x\in\R^{d}$ as 
$x=\kappa t+\eta$ with $t\in\R$, $\eta\in\Lambda_\kappa$. 
Note that by the orthogonality relation $(\omega-\omega')\perp \kappa$, 
one has
$$
\jap{\omega-\omega',x}
=
\jap{\omega-\omega',\eta}.
$$
Thus, the integral kernel of $Q(k)$ can be rewritten as
\begin{multline}
Q(k)(\omega,\omega')
=
-\frac{1}{2} \left(\frac{k}{2\pi}\right)^{d-1}
\int_{\Lambda_{\kappa(\omega,\omega')}}\int_{-\infty}^\infty
e^{-ik\jap{\omega-\omega',\eta}}
V(\kappa(\omega,\omega')t+\eta) \dd t\, \dd \eta
\\
=
\left(\frac{k}{2\pi}\right)^{d-1}
\int_{\Lambda_{\kappa(\omega,\omega')}}
e^{-ik\jap{\omega-\omega',\eta}}
X(\kappa(\omega,\omega'),\eta)\dd \eta, 
\label{c3}
\end{multline}
where $X$ is given by \eqref{eq.a5}. 
From here we directly obtain the required 
identity \eqref{eq.c1} in the case $\ell=1$ by 
integrating the kernel of $Q(k)$ over the diagonal. 
Now it remains to prove \eqref{eq.c1} for $\ell\geq2$.

2.
Let $\chi_0\in C_0^\infty(\sph\times\sph)$ 
be such that $\chi_0(\omega,\omega')=1$ in an open neighbourhood
of the diagonal $\omega=\omega'$ and $\chi_0(\omega,\omega')=0$
in an open neighbourhood of the anti-diagonal $\omega+\omega'=0$. 
Denote $\chi_1=1-\chi_0$, and let
$$
Q(k)=Q_0(k)+Q_1(k),
$$
where $Q_j(k)$ is the operator with the integral kernel 
$\chi_j(\omega,\omega')Q(k)(\omega,\omega')$. 
By the fast decay of the Fourier transform of $V$ and by the fact that
$\abs{\omega-\omega'}$ is separated away from zero on the support
of $\chi_1$, we see that 
$$
\sup_{\omega,\omega'}\abs{Q_1(k)(\omega,\omega')}=O(k^{-\infty}), 
\quad 
k\to\infty.
$$
Thus, it suffices to prove that
$$
\lim_{k\to\infty}k^{1-d}\Tr (Q_0(k))^{\ell}
=
\int_{-\infty}^\infty t^{\ell} d\mu(t).
$$
From \eqref{c3} 
it follows that $Q_0(k)$ can be represented as a semiclassical 
$\Psi$DO on the sphere of the type \eqref{eq.b13}: 
$$Q_0(k)=\Op_k[b],
\quad \text{where }\quad
b(\omega,\omega',\eta)=\chi_0(\omega,\omega') X(\kappa(\omega,\omega'),\eta),
$$ 
$b$ is an admissible amplitude in the sense discussed in Section~\ref{sec.b2},
and  $X$ is given by \eqref{eq.a5}.
Applying Proposition~\ref{prop.b4}, we get
\begin{equation}
\lim_{k\to\infty}k^{1-d}\Tr (Q_0(k))^{\ell}
=
(2\pi)^{1-d}\int_{\sph}\int_{\Lambda_\omega} X(\omega,\eta)^{\ell}  \dd \eta\,\dd \omega
\\
=
\int_{-\infty}^\infty t^{\ell} d\mu(t),
\label{c2a}
\end{equation}
as required. 
\end{proof}

%%%%%%%%%%%%%%%%%%%%%%%%%%%%%%%%%%%%%%%%%%%%%%
\subsection{The Born approximation with general $V$}
\label{sec.c2}
%%%%%%%%%%%%%%%%%%%%%%%%%%%%%%%%%%%%%%%%%%%%%%

%%%%%%%%%%%%%%%%%%%
\begin{lemma}\label{prop.c2}
%%%%%%%%%%%%%%%%%%%
Let $V\in X_\rho$ with $\rho>1$. 
Then for any integer $\ell\geq 1$ satisfying $\ell > \frac{d-1}{\p-1}$
one has
$$
\lim_{k\rightarrow\infty}k^{1-d}\Tr(\Im kS_{B}(k^2))^{\ell} = \int_{-\infty}^{\infty}t^{\ell}\dd\mu(t).
$$
\end{lemma}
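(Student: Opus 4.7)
The plan is to deduce Lemma~\ref{prop.c2} from Lemma~\ref{prop.c1} by approximating the given $V\in X_\rho$ by smooth, compactly supported potentials $V_n$ and controlling the resulting errors on both sides uniformly in $k$. Since $S_B(k^2)$ (via \eqref{eq.a12} and the explicit kernel in Section~\ref{sec.c1}) and the X-ray transform $X$ are linear in $V$, these errors decompose cleanly in terms of $V-V_n$. The only subtlety is that $C_0^\infty(\R^d)$ is not dense in $X_\rho$ with respect to its own norm (for instance $V(x)=\jap{x}^{-\rho}$ does not lie in the closure), so I must approximate in a weaker norm. Fortunately, the hypothesis $\ell>(d-1)/(\rho-1)$ is strict, so I can fix an auxiliary exponent $\rho'\in(1,\rho)$ with $\ell>(d-1)/(\rho'-1)$. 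Then $X_\rho\subset X_{\rho'}$ continuously, and a standard cutoff-plus-mollification $V_n=\phi_{1/n}*(\chi_n V)$, with $\chi_n\in C_0^\infty(\R^d)$ equal to $1$ on the ball of radius $n$ and $\phi_{1/n}$ a standard mollifier, produces $V_n\in C_0^\infty(\R^d)$ with $\Vert V-V_n\Vert_{X_{\rho'}}\to 0$ and $\sup_n\Vert V_n\Vert_{X_{\rho'}}<\infty$: the tail $(1-\chi_n)V$ contributes at most $C\sup_{|x|\geq n}\jap{x}^{\rho'-\rho}\to 0$ in $X_{\rho'}$ norm since $\rho'<\rho$, while the mollification error is controlled by the uniform continuity of $V$ on compacts and the same tail estimate near $|x|\sim n$.

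Writing $S_B^V$ for the Born operator of $V$ and $\mu^V$ for the associated measure, linearity gives $k\Im(S_B^V-S_B^{V_n})=k\Im S_B^{V-V_n}$, so \eqref{eq.b3} applied with $\rho'$ in place of $\rho$ (legitimate since $V-V_n\in X_{\rho'}$ and $\ell>(d-1)/(\rho'-1)$) yields
\begin{equation*}
\Vert k\Im(S_B^V(k^2)-S_B^{V_n}(k^2))\Vert_\ell\leq C k^{(d-1)/\ell}\Vert V-V_n\Vert_{X_{\rho'}}, \qquad k\geq 1.
\end{equation*}
Combining this with \eqref{eq.b10} and a further use of \eqref{eq.b3} to bound $\Vert k\Im S_B^V\Vert_\ell$ and $\Vert k\Im S_B^{V_n}\Vert_\ell$, one obtains
\begin{equation*}
k^{1-d}\bigl|\Tr(k\Im S_B^V(k^2))^\ell-\Tr(k\Im S_B^{V_n}(k^2))^\ell\bigr|\leq C_1\Vert V-V_n\Vert_{X_{\rho'}},\qquad k\geq 1.
\end{equation*}
On the measure side, the pointwise bound $|X^V(\omega,\eta)-X^{V_n}(\omega,\eta)|\leq C\Vert V-V_n\Vert_{X_{\rho'}}(1+|\eta|)^{1-\rho'}$ (read off from the derivation of \eqref{eq.a6}), the identity $A^\ell-B^\ell=(A-B)\sum_{j=0}^{\ell-1}A^j B^{\ell-1-j}$, and integrability of $(1+|\eta|)^{(1-\rho')\ell}$ on $\sph\times\Lambda_\omega$ (guaranteed by $\ell>(d-1)/(\rho'-1)$) deliver
\begin{equation*}
\Bigl|\int_{-\infty}^\infty t^\ell\,\dd\mu^V(t)-\int_{-\infty}^\infty t^\ell\,\dd\mu^{V_n}(t)\Bigr|\leq C_2\Vert V-V_n\Vert_{X_{\rho'}}.
\end{equation*}

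A routine $3\varepsilon$ argument concludes the proof: given $\varepsilon>0$, pick $n$ so large that both $C_1\Vert V-V_n\Vert_{X_{\rho'}}$ and $C_2\Vert V-V_n\Vert_{X_{\rho'}}$ are below $\varepsilon/3$; Lemma~\ref{prop.c1} applied to this $V_n$ then furnishes $k_0$ with $|k^{1-d}\Tr(k\Im S_B^{V_n}(k^2))^\ell-\int t^\ell \dd\mu^{V_n}(t)|<\varepsilon/3$ for $k\geq k_0$, and the triangle inequality delivers the claim. The main (mild) obstacle is precisely the non-density of $C_0^\infty$ in the natural norm $X_\rho$, which forces the detour through the auxiliary exponent $\rho'$; once this calibration is in place the proof is essentially bookkeeping with the Schatten-norm estimates of Proposition~\ref{prop.b1} and the pointwise control \eqref{eq.a6} on the X-ray transform.
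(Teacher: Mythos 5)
Your proposal is correct and follows essentially the same route as the paper: both prove the result for $V\in C_0^\infty$ via Lemma~\ref{prop.c1}, establish continuity of both sides in a weaker norm $X_{\rho'}$ (the paper's $X_{\rho_1}$), and use the embedding $X_\rho\subset\overline{C_0^\infty}^{X_{\rho'}}$ to close the argument. The only difference is cosmetic — you spell out the cutoff-plus-mollification construction of the approximating sequence, whereas the paper simply invokes ``an approximation argument'' and the inclusion $X_\rho\subset X_{\rho_1}^0$.
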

\begin{proof}
Let $X_{\rho}^{0}$ be the closure of $C_0^{\infty}(\R^{d})$ in $X_{\rho}$. 
For any $\ell>\frac{d-1}{\rho-1}$, denote
\begin{align*}
g_{\ell}(V)& = \int_{-\infty}^{\infty}t^{\ell}\dd\mu(t), \\
g_{\ell}^{+}(V)& = \limsup_{k\rightarrow\infty}k^{1-d}\Tr(k\Im S_{B}(k^2))^{\ell}, \\
g_{\ell}^{-}(V)& = \liminf_{k\rightarrow\infty}k^{1-d}\Tr(k\Im S_{B}(k^2))^{\ell}.
\end{align*}
By Lemma~\ref{prop.c1}, for all $V\in C_0^{\infty}(\R^{d})$ we have
\begin{equation}
\label{eq.c18}
g_{\ell}(V) = g_{\ell}^{+}(V)=g_{\ell}^{-}(V).
\end{equation}
Recall that $S_B(k^2)$ depends linearly on $V$. 
Using  this fact and the estimates \eqref{eq.b3} and \eqref{eq.b10}, 
it is easy to check that $g_{\ell}^{\pm}(V)$ are continuous functionals on $X_{\p}$. 
Next, using the last equality in \eqref{c2a} and 
 the estimate \eqref{eq.a6}, it is easy to see that $g_{\ell}(V)$ is a continuous functional on $X_{\p}$. 
Thus, by an approximation argument,  \eqref{eq.c18} holds for any $V\in X_{\p}^{0}$. 
Finally, for any $V\in X_{\rho}$ and a given $\ell>\frac{d-1}{\rho-1}$, choose $\rho_{1}$ 
such that $1<\rho_{1}<\rho$ with $\ell>\frac{d-1}{\rho_{1}-1}$. 
Then $X_{\rho}\subset X_{\rho_{1}}^{0}$ and the previous argument proves
\eqref{eq.c18} for all $V\in X_{\rho_{1}}^{0}$ which suffices.
\end{proof}
%%%%%%%%%%%%%%%%%%%%%%%%%%%%%%%%%%%%%%%%%%%%%%
\subsection{From the Born approximation to the full scattering matrix}
\label{sec.c3}
%%%%%%%%%%%%%%%%%%%%%%%%%%%%%%%%%%%%%%%%%%%%%%
\begin{lemma}
\label{prop.c3}
Let $V\in X_\rho$ with $\rho>1$. 
Then for any integer $\ell\geq 1$ satisfying $\ell+2>\frac{d-1}{\p-1}$,
$$
\lim_{k\rightarrow\infty}k^{1-d}\int_{-\infty}^{\infty}t^{\ell}\dd\mu_{k}(t) = \int_{-\infty}^{\infty}t^{\ell}\dd\mu(t).
$$
\end{lemma}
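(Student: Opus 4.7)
The plan is to pass from $\int t^\ell\dd\mu_k(t)=\sum_n(k\theta_n(k^2))^\ell$ to $\Tr(k\Im S_B(k^2))^\ell$, for which Lemma~\ref{prop.c2} already delivers the asymptotics. I do this by interpolating through $\Tr(k\Im S(k^2))^\ell$ in two comparison steps: first replace the phases $\theta_n$ by their sines $\sin\theta_n$, which are the eigenvalues of the self-adjoint operator $\Im S(k^2)$; then replace $S(k^2)$ by its Born approximation $S_B(k^2)$.

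For the first comparison, the uniform bound $\|S(k^2)-I\|=O(k^{-1/2})$ from \eqref{eq.a2} forces $|\theta_n(k^2)|\le Ck^{-1/2}$ uniformly in $n$, so in particular all phases lie in $[-1,1]$ for $k$ large. Using the cubic Taylor remainder $|\theta-\sin\theta|\le|\theta|^3/6$, the elementary inequality $|a^\ell-b^\ell|\le\ell\max(|a|,|b|)^{\ell-1}|a-b|$, and the bound $|\theta|\le(\pi/2)|\sin\theta|$ valid on $[-\pi/2,\pi/2]$, one gets the pointwise estimate
\[
\bigl|(k\theta_n)^\ell-(k\sin\theta_n)^\ell\bigr|\le C_\ell\,k^{-2}\,|k\sin\theta_n|^{\ell+2}.
\]
Summing in $n$ and invoking \eqref{eq.b3a} at the exponent $\ell+2$ (this is where the hypothesis $\ell+2>(d-1)/(\rho-1)$ enters) yields
\[
\sum_n\bigl|(k\theta_n)^\ell-(k\sin\theta_n)^\ell\bigr|\le C k^{-2}\|k\Im S(k^2)\|_{\ell+2}^{\ell+2}\le C k^{d-3},
\]
so that $k^{1-d}\bigl|\int t^\ell\dd\mu_k(t)-\Tr(k\Im S(k^2))^\ell\bigr|=O(k^{-2})\to 0$.

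For the second comparison, Lemma~\ref{prop.b2} gives $|\Tr(k\Im S(k^2))^\ell-\Tr(k\Im S_B(k^2))^\ell|=O(k^{d-2})$, which becomes $O(k^{-1})$ after the $k^{1-d}$ scaling. Lemma~\ref{prop.c2} then supplies $k^{1-d}\Tr(k\Im S_B(k^2))^\ell\to\int t^\ell\dd\mu(t)$, and chaining the three estimates closes the argument.

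The main obstacle is the sharp error control in Step~1. A na\"ive pointwise bound that only uses $|\theta-\sin\theta|\le|\theta|$ would produce a total error of order $\|k\Im S(k^2)\|_\ell^\ell\sim k^{d-1}$, which fails to be $o(k^{d-1})$. Exploiting the \emph{cubic} vanishing of $\theta-\sin\theta$ at $\theta=0$ extracts two extra powers of $k^{-1}$, at the cost of raising the relevant Schatten exponent from $\ell$ to $\ell+2$; this is precisely what the hypothesis $\ell+2>(d-1)/(\rho-1)$ is designed to accommodate through \eqref{eq.b3a}. Once Step~1 goes through, Steps~2 and 3 are direct appeals to the lemmas already proved.
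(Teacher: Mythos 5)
Your proposal is correct and takes essentially the same route as the paper: a two-step comparison through $\Tr(k\Im S(k^2))^\ell$, exploiting the cubic vanishing of $\theta-\sin\theta$ to extract two extra powers of $k^{-1}$ and applying \eqref{eq.b3a} at the shifted exponent $\ell+2$, which is exactly the paper's mechanism for the first step, with Lemmas~\ref{prop.b2} and~\ref{prop.c2} finishing the second. One small slip: since $\lambda=k^2$, \eqref{eq.a2} actually gives $\|S(k^2)-I\|=O(k^{-1})$ rather than $O(k^{-1/2})$, but this is immaterial to your argument.
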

\begin{proof}
By Lemmas~\ref{prop.b2} and \ref{prop.c2}, it suffices to prove
\begin{equation}
\label{eq.c20}
\lim_{k\rightarrow\infty}k^{1-d}\left\vert
\int_{-\infty}^{\infty}t^{\ell}\dd\mu_{k}(t) - \Tr(\Im kS(k^2))^{\ell}
\right\vert = 0.
\end{equation}
Recalling the definition \eqref{eq.a3} of the measure $\mu_k$, 
one sees that \eqref{eq.c20} is equivalent to 
\begin{equation}
\label{eq.c21}
\lim_{k\rightarrow\infty}k^{1-d+\ell}\left\vert\sum_{n=1}^{\infty}[(\theta_{n}(k^2))^{\ell} - (\sin\theta_{n}(k^2))^{\ell}]\right\vert = 0.
\end{equation}
By \eqref{eq.a2} we have  $0<\vert\theta_{n}(k^2)\vert <\pi/4$ for all sufficiently large $k$ and all $n$. 
From the elementary estimates $\vert\theta_{n}\vert\leq 2\vert\sin\theta_{n}\vert$ and $\vert\theta_{n} - \sin\theta_{n}\vert\leq C\vert\sin\theta_{n}\vert^{3}$ which hold for $\vert\theta_{n}\vert <\pi/4$, it follows that for all sufficiently large $k$
\begin{equation*}
\begin{split}
k^{1-d+\ell}\sum_{n=1}^{\infty}&\vert(\theta_{n})^{\ell} - (\sin\theta_{n})^{\ell}\vert 
\leq 
k^{1-d+\ell}\sum_{n=1}^{\infty}
\left(
\vert\theta_{n}-\sin\theta_{n}\vert
\sum_{j=0}^{\ell-1}\vert\theta_{n}\vert^{j}\vert\sin\theta_{n}\vert^{\ell-1-j}
\right)
\\
&\leq 
k^{1-d+\ell}C(\ell)\sum_{n=1}^{\infty}\vert\sin\theta_{n}\vert^{\ell+2}
=
k^{1-d-2}C(\ell)\Vert\Im kS(k^{2})\Vert_{\ell+2}^{\ell+2}.
\end{split}
\end{equation*}
Now \eqref{eq.c21} follows by applying the estimate \eqref{eq.b3a} for 
$\Vert\Im kS(k^{2})\Vert_{\ell+2}$ to the result just obtained.
\end{proof}
\begin{proof}[Proof of Theorem \ref{thm.a1}]
By the estimate \eqref{eq.a2}, the supports of $\mu_k$ are bounded uniformly 
in $k\geq1$. On the other hand, by the boundedness of $V$, the support of $\mu$
is also bounded. 
Thus, we may choose $T>0$ such that 
$$
\supp\mu\subset[-T,T]
\quad\text{ and }\quad
\supp\mu_k\subset[-T,T] 
\quad\text{ for all }k\geq1.
$$

Next, fix $\psi\in C_0^{\infty}(\R\setminus\{ 0\})$, 
and let $\ell_{0}$ be an even natural number satisfying 
$\ell_{0}>\frac{d-1}{\p-1}$. 
Since $\psi(t)$ vanishes near $t=0$ by assumption, 
the function $\psi(t)/t^{\ell_{0}}$ is smooth. 
By the Weierstrass approximation theorem, for any $\e >0$ 
there exists a polynomial $\psi_0(t)$ such that
$$
\abs{\psi(t)t^{-\ell_0}-\psi_0(t)}\leq \e, 
\quad \forall t\in[-T,T].
$$
Denoting $\psi_\pm(t)=(\psi_0(t)\pm\e)t^{\ell_0}$,
we obtain
\begin{align}
&\psi_-(t)\leq \psi(t)\leq \psi_+(t), \quad \forall t\in[-T,T],
\label{eq.c22}
\\
&\psi_+(t)-\psi_-(t)=2\e t^{\ell_0}.
\label{eq.c23}
\end{align}
By \eqref{eq.c22}, we get
\begin{equation}
\int_{-\infty}^{\infty}\psi_{-}(t)\dd\mu_{k}(t)
\leq
\int_{-\infty}^{\infty}\psi(t)\dd\mu_{k}(t)
\leq
\int_{-\infty}^{\infty}\psi_{+}(t)\dd\mu_{k}(t).
\label{eq.c24}
\end{equation}
By construction, $\psi_\pm(t)$ are polynomials which involve
powers $t^m$ with $m\geq \ell_0$. Thus, we can apply 
Lemma~\ref{prop.c3} to \eqref{eq.c24}, 
which yields 
\begin{align*}
\limsup_{k\rightarrow\infty}k^{1-d}\int_{-\infty}^{\infty}\psi(t)\dd\mu_{k}(t)
&\leq 
\int_{-\infty}^{\infty}\psi_{+}(t)\dd\mu(t),
\\
\liminf_{k\rightarrow\infty}k^{1-d}\int_{-\infty}^{\infty}\psi(t)\dd\mu_{k}(t)
&\geq 
\int_{-\infty}^{\infty}\psi_{-}(t)\dd\mu(t).
\end{align*}
On the other hand, by \eqref{eq.c22}, \eqref{eq.c23}, 
$$
\int_{-\infty}^{\infty}\psi_-(t)\dd\mu(t)
\leq 
\int_{-\infty}^{\infty}\psi(t)\dd\mu(t)
\leq
\int_{-\infty}^{\infty}\psi_+(t)\dd\mu(t)
$$
and 
$$
\int_{-\infty}^{\infty}\psi_+(t)\dd\mu(t)
-
\int_{-\infty}^{\infty}\psi_-(t)\dd\mu(t)
=
2\e 
\int_{-\infty}^{\infty}t^{\ell_0}\dd\mu(t).
$$
By \eqref{eq.a6}, the integral in the right hand side of the last estimate is finite; 
denote this integral by $C$.
Combining the above estimates, we obtain 
\begin{align*}
\limsup_{k\rightarrow\infty}k^{1-d}\int_{-\infty}^{\infty}\psi(t)\dd\mu_{k}(t)
&\leq 
\int_{-\infty}^{\infty}\psi(t)\dd\mu(t)
+
2\e C,
\\
\liminf_{k\rightarrow\infty}k^{1-d}\int_{-\infty}^{\infty}\psi(t)\dd\mu_{k}(t)
&\geq 
\int_{-\infty}^{\infty}\psi(t)\dd\mu(t)
-
2\e C.
\end{align*}
Since $\e>0$ can be taken arbitrary small, we obtain the required
statement.
\end{proof}

\goodbreak

\end{document}